\documentclass[11pt]{article} 
\usepackage[utf8]{inputenc}
\usepackage[T1]{fontenc}
\usepackage[english]{babel}
\usepackage{ulem}

\usepackage{amsmath,amsfonts,amssymb,amsthm,mathrsfs,amsrefs, bbm}
\usepackage{tikz}

\renewcommand{\leq}{\leqslant}
\renewcommand{\geq}{\geqslant}
\usepackage[cmintegrals,libertine]{newtxmath}
\usepackage[cal=euler]{mathalfa}
\usepackage[mono=false]{libertine}
\useosf
\linespread{1.1}

\usepackage[a4paper,vmargin={3.5cm,3.5cm},hmargin={2.5cm,2.5cm}]{geometry}
\usepackage[font={small,sf}, labelfont={sf,bf}, margin=1cm]{caption}
\captionsetup{width=0.8\textwidth}

\usepackage[pdftex,colorlinks=true]{hyperref}
\usepackage[expansion]{microtype}

\usepackage{stackrel}
\usepackage{soul} 

\theoremstyle{plain}
\newtheorem{theorem}{Theorem}

\newtheorem{proposition}[theorem]{Proposition}
\newtheorem{lemma}[theorem]{Lemma}
\theoremstyle{definition}

\newcommand{\eps}{\varepsilon}


\hypersetup{
    pdftitle    = {}}

\usepackage{overpic, float}

\title{On the minimal diameter of closed hyperbolic surfaces}
\author{\textsc{Thomas Budzinski}\footnote{University of British Columbia. E-mail: \href{mailto:budzinski@math.ubc.ca}{budzinski@math.ubc.ca}.}, \textsc{Nicolas Curien}\footnote{Universit\'e Paris-Saclay and Institut Universitaire de France. E-mail: \href{mailto:nicolas.curien@gmail.com}{nicolas.curien@gmail.com}.} \, and \textsc{Bram Petri}\footnote{Sorbonne Universit\'e. E-mail: \href{mailto:bram.petri@imj-prg.fr}{bram.petri@imj-prg.fr}}}

\begin{document}

\maketitle
\begin{abstract} We prove that the minimal diameter of a hyperbolic compact orientable surface of genus $g$ is asymptotic to $\log g$ as $g \to \infty$. The proof relies on a random construction, which we analyse using lattice point counting theory and the exploration of random trivalent graphs.
\end{abstract}

\section*{Introduction}
When studying the various shapes a closed orientable hyperbolic (i.e. with constant curvature equal to $-1$) surface of a given genus can have, the diameter is a natural geometric invariant to consider. It's interesting in its own right and it also relates to spectral and isoperimetric properties of the given surface. For $g\geq 2$, define
\[ D_g = \min\left\{\mathrm{diam}(X) ;\; X \mbox{ closed orientable hyperbolic surface of genus  }g\right\},\]
where  $ \mathrm{diam}(X)$ is the diameter of a metric space $X$. It is easy to see (using the collar lemma) that one can construct hyperbolic surfaces of a fixed genus $ g \geq 2$ of arbitrarily large diameter. On the other hand, a simple area argument yields that the diameter $\mathrm{diam}(X)$ of a closed orientable hyperbolic surface $X$ of genus $g$ satisfies\footnote{The actual lower bound is $\cosh^{-1}(2g-1)$, but that rolls off the tongue less well.}
\begin{equation}\label{general_lower_bound_diameter}
\mathrm{diam}(X) \geq \log(4g-2),
\end{equation}
so the function $D_{g}$ is asymptotically bounded from below by $\log g$ as $g \to \infty$. The sharpest known lower bound is due to Bavard \cite{Bavard}, which improves on \eqref{general_lower_bound_diameter} by at most an additive constant. Our goal is to prove that this lower bound is asymptotically sharp:
\begin{theorem}\label{thm_main} We have:
\[ \lim_{g\to\infty} \frac{D_g}{\log g} = 1.\]
\end{theorem}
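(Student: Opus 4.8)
The lower bound is immediate: the stated area estimate gives $\mathrm{diam}(X)\geq\log(4g-2)$ for every closed orientable hyperbolic surface $X$ of genus $g$, so $\liminf_{g}D_{g}/\log g\geq 1$, and it remains only to show that for each fixed $\eps>0$ and all large $g$ there \emph{exists} such a surface of diameter at most $(1+\eps)\log g$. I would produce one probabilistically. Let $\mathbb{X}_{g}$ be the random surface obtained from $2g-2$ copies of a fixed hyperbolic pair of pants, all of whose boundary components have length $\ell=\ell(\eps)$, by identifying their $6g-6$ boundary circles according to a uniformly random trivalent multigraph $\mathbb{G}_{g}$ on $2g-2$ vertices, with an independent uniform twist along each of the $3g-3$ gluing geodesics. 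Choosing orientation-reversing identifications makes $\mathbb{X}_{g}$ orientable, and an Euler-characteristic count shows that whenever $\mathbb{G}_{g}$ is connected --- which occurs with probability tending to $1$ --- the surface $\mathbb{X}_{g}$ is closed and of genus exactly $g$. (A Weil--Petersson random surface would serve equally well; the pants model is convenient because its combinatorial skeleton is literally a uniform random trivalent graph.)

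The core claim is that, with probability tending to $1$, every pair of points $x,y\in\mathbb{X}_{g}$ is joined by a geodesic arc of length at most $(1+\eps)\log g$, which is equivalent to $\mathrm{diam}(\mathbb{X}_{g})\leq(1+\eps)\log g$. The natural quantity is the lattice-point count $Z_{x,y}:=\#\{\gamma\in\pi_{1}(\mathbb{X}_{g}):\ d_{\mathbb{H}^{2}}(\tilde{x},\gamma\tilde{y})\leq R\}$, the number of homotopy classes of arcs from $x$ to $y$ of geodesic length $\leq R$, and one wants $Z_{x,y}\geq 1$ for $R=(1+\eps)\log g$. On a hyperbolic surface of area $4\pi(g-1)$ one has, once $R$ is large, $Z_{x,y}\approx\mathrm{area}\,B_{\mathbb{H}^{2}}(R)/\mathrm{area}(\mathbb{X}_{g})\approx e^{R}/4g$; the exponential rate here is $1$, the volume entropy of $\mathbb{H}^{2}$, which is exactly why $Z_{x,y}$ crosses $1$ near $R=\log g$ --- in agreement with the lower bound, since below $\log g$ even the typical count is less than $1$. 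The role of the random construction is to force this asymptotic regime to be attained already at the critical scale $R\approx\log g$, uniformly over $x$ and $y$.

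To prove this I would enumerate the homotopy classes of arcs by combinatorial data intrinsic to the pants decomposition --- a walk in $\mathbb{G}_{g}$ through the traversed pairs of pants, decorated by the (bounded, per step) description of how the arc threads each pair of pants, including a controlled amount of winding --- so that $Z_{x,y}=\sum_{c}\mathbf{1}[L_{c}\leq R]$ with $L_{c}$ the length of the geodesic in class $c$, a function of the random twists. A first-moment computation, $\E[Z_{x,y}]=\sum_{c}\PP(c\text{ is a valid walk in }\mathbb{G}_{g})\,\PP(L_{c}\leq R)$, combines the expected number of length-$k$ walks in a random trivalent graph between two vertices (of order $3^{k}/g$ past the mixing scale) with large-deviation estimates for $\PP(L_{c}\leq R)$ drawn from the hyperbolic geometry of the pair of pants; carried out in the spirit of the proof of the hyperbolic lattice-point theorem, this shows that the governing exponential rate is the volume entropy $1$, so that $\E[Z_{x,y}]$ grows like a positive power of $g$ when $R=(1+\eps)\log g$. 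A matching second-moment bound $\E[Z_{x,y}^{2}]\leq(1+o(1))\,\E[Z_{x,y}]^{2}$ then follows by decorrelating the contributions of far-apart regions of $\mathbb{X}_{g}$, which is where the \emph{exploration} of $\mathbb{G}_{g}$ enters: revealing the graph along the two walks $c,c'$ and using that a random trivalent graph looks, around any vertex, like the $3$-regular tree up to depth of order $\log_{2}g$ shows that the walks typically overlap little, so the twists they encounter are essentially independent. Paley--Zygmund then gives a quantitative bound on $\PP(Z_{x,y}=0)$, and a union bound over a net of basepoints of polynomial size --- the loss absorbed by the continuity of $x\mapsto Z_{x,y}$ --- upgrades this to all $x,y$ at once, which yields the theorem.

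The main obstacle is the \emph{sharpness} of the constant. Crude arguments --- bounding the diameter by how many pairs of pants a metric ball meets, or by the radius at which metric balls cover half the surface (which costs a factor $2$) --- only give $\mathrm{diam}(\mathbb{X}_{g})=O(\log g)$ with a constant strictly larger than $1$. Pushing it to $1$ requires tracking $Z_{x,y}$ with essentially no exponential loss all the way to the critical radius $\log g$: one must recognise that the governing rate is the volume entropy of $\mathbb{H}^{2}$ rather than a combinatorial branching constant, handle the delicate pre-asymptotic regime of the lattice-point count (where, unlike in the classical theorem, $R$ grows with the surface), make the second moment tight enough that the graph-exploration decorrelation is not wasteful, and obtain everything uniformly over all pairs of basepoints. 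Calibrating these --- in particular choosing $\ell(\eps)$, and quantifying the exploration of $\mathbb{G}_{g}$, so that the first- and second-moment estimates meet at the right scale --- is the technical heart of the argument.
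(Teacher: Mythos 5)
Your lower bound coincides with the paper's area argument. For the upper bound your route is genuinely different: you propose a first/second moment analysis of the arc count $Z_{x,y}$, concentration via Paley--Zygmund, and a union bound over a net of basepoints, whereas the paper runs a Bollob\'as--de la Vega-style breadth-first exploration of the random trivalent graph \emph{in the hyperbolic metric}, shows that the explored neighborhood is tree-like with at most $O(\log^3 n)$ defects, and then shows that two such explorations from independent starting points merge with high probability. You also glue with uniformly random twists, while the paper glues \emph{without} twist; that choice is what makes every local neighborhood literally isometric to a piece of one fixed ``trivalent tree of pants'' $T_a$, so the volume growth reduces to a single lattice-point count for a convex-cocompact reflection group (Theorem~\ref{thm:counting}). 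Your route is not implausible, but the second-moment step hides substantial work: $Z_{x,y}$ is a sum of strongly dependent indicators, with tails coming from arcs winding around the length-$\ell$ pants curves and from backtracking walks, and making $\E[Z_{x,y}^2]\leq(1+o(1))\E[Z_{x,y}]^2$ \emph{at the critical scale} is far from automatic.

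More importantly, the central heuristic you give for why $R=(1+\eps)\log g$ should suffice is wrong as stated. You assert that the governing exponential rate of $\E[Z_{x,y}]$ is the volume entropy $1$ of $\mathbb{H}^2$, so that $Z_{x,y}\approx e^{R}/4g$ and the count crosses $1$ near $R=\log g$. That estimate is the hyperbolic lattice-point theorem, which holds in the regime $R\to\infty$ with the surface \emph{fixed}, i.e.\ once $R\gg\mathrm{diam}(\mathbb{X}_g)$ --- but the diameter is exactly what you are trying to bound, so this regime is inaccessible. At scale $R\asymp\log g$ the count only sees the locally tree-like piece of the surface, and the rate your first-moment computation would actually produce (walk count $\asymp 2^{k}/g$ folded against the per-step geometry of the pants) is the critical exponent $\delta_\ell<1$ of the hextree reflection group from Theorem~\ref{thm:counting}, \emph{not} $1$. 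Consequently $\E[Z_{x,y}]\asymp g^{\delta_\ell(1+\eps)-1}$ at $R=(1+\eps)\log g$, which is a \emph{negative} power of $g$ unless $\ell$ is first chosen large enough that $\delta_\ell>1/(1+\eps)$. You do say that $\ell=\ell(\eps)$ must be calibrated, but you give the opposite explanation (``the governing rate is the volume entropy rather than a combinatorial branching constant''); in fact the rate \emph{is} a Patterson--Sullivan-type exponent $\delta_\ell<1$, and the indispensable input --- which any version of your argument must invoke explicitly --- is McMullen's result that $\delta_\ell\to 1$ as $\ell\to\infty$.
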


Similar to the case of regular graphs, \emph{random} surfaces are a good source of examples of surfaces with small diameter, large Cheeger constant or large spectral gap of the Laplacian \cite{BM04,Mirzakhani} (arithmetic surfaces have similar properties in this regard \cite{BrooksNumberTheory,Selberg, SarnakXue, Brooks, Clozel}).

The model we use is based on gluing hyperbolic \emph{pairs of pants} along the combinatorics of a random trivalent graph (without twist), see Section \ref{sec:construct}. \emph{Those surfaces will be parametrized by the common length $a \in (0, \infty)$ of the perimeters of the pair of pants we glue together.} One key input will be to explore the neighborhood of a typical point in the underlying trivalent graph \emph{for the hyperbolic distance in the surface} and show that this neighborhood is essentially tree-like up to a few defects. This part of the argument is similar to that of Bollobas \& de la Vega \cite{BFdlV} who proved that a random $3$-regular graph $ \mathcal{G}_{n}$ on $n$ vertices satisfies $\frac{\mathrm{diam}(\mathcal{G}_n)}{\log_{2}(n)} \to 1$ in probability as $n \to \infty$. That last result is the discrete analogue of our Theorem \ref{thm_main} since it is the minimal growth rate that comes out of the argument analogous to the area argument for hyperbolic surfaces (i.e. the ball of radius $r$ for the graph distance has size at most of order $2^r$).  We then use lattice point counting theory to control the volume growth for the hyperbolic metric in a trivalent tree of pants, parametrized by a side length $a \in (0,\infty)$. Finally letting $a \to \infty$, we get our main result.

Note that there are other models of random hyperbolic surfaces, notably the Brooks-Makover construction of random Riemann surfaces  $X_n$ \cite{BM04}, in which $2n$ hyperbolic ideal triangles are glued uniformly at random along their boundary to create a surface of genus $ g \sim \frac{n}{2}$. In a companion paper \cite{BCP19+} we show that they actually satisfy $\mathrm{diam}(X_n) \sim  2\log(g)$ and so they miss the asymptotic lower bound \eqref{general_lower_bound_diameter} by a factor $2$.

\begin{figure}[!h]
 \begin{center}
 \includegraphics{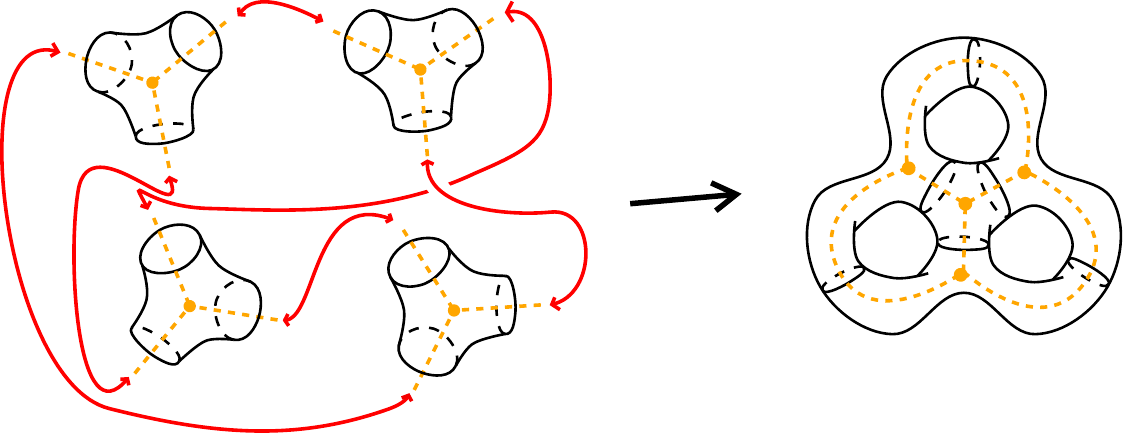}
 \caption{ \label{fig:construction}Construction of the random surfaces used in the proof of Theorem \ref{thm_main}. We start with $2n$ identical copies of a pair of pants with three boundary components of the same length $a \in (0, \infty)$ and pair these boundary components uniformly at random. The paired components get glued without twist. }
 \end{center}
 \end{figure}

\textbf{Acknowledgments:} We thank Maxime Fortier Bourque, Ursula Hamenst\"adt, Pierre Pansu, Hugo Parlier, Fr\'ed\'eric Paulin, and Juan Souto for useful discussions. We acknowledge support from \texttt{ERC 740943} ``GeoBrown''. 

\section{Background material on hyperbolic geometry}
In this section we present the construction of a trivalent tree of pants and analyse its  ``volume growth'' based on lattice point counting theory for convex cocompact groups of isometries in the hyperbolic plane. We then introduce the model of random surface we will use to prove our main theorem.

\subsection{Pants trees and their volume growth}

Recall that a hyperbolic metric on a pair of pants (a surface homeomorphic to a sphere out of which three disjoint open disks have been removed) with totally geodesic boundary is completely determined by the lengths of the three boundary components. Given $a\in (0,\infty)$ we will denote the hyperbolic pair of pants with three boundary components of length $a$ by $P_a$. For later use, we will once and for all fix a basepoint $p_0 \in P_a$. For convenience, we will let $p_0$ be one of the two points at equal distance from all the boundary components of $P_a$.

Furthermore, $T_a$ will denote the hyperbolic surface of infinite volume that is obtained by gluing countably many copies of $P_a$ together in the shape of a trivalent tree. More precisely, if $\mathcal{T}_3$ is an infinite simplicial trivalent tree, we associate to each vertex a copy of $P_a$ and to each edge emanating from this vertex one of the boundary components of that copy $P_a$. For each edge we then glue the two boundary components associated to it together without twist, in such a way that the two copies of the base point $p_0$ (we will call these midpoints) in the corresponding two pairs of pants are on the same side (see Figure \ref{pic_pants_tree} for a sketch).
\begin{figure}[!h]
\begin{center}
\includegraphics[scale=1]{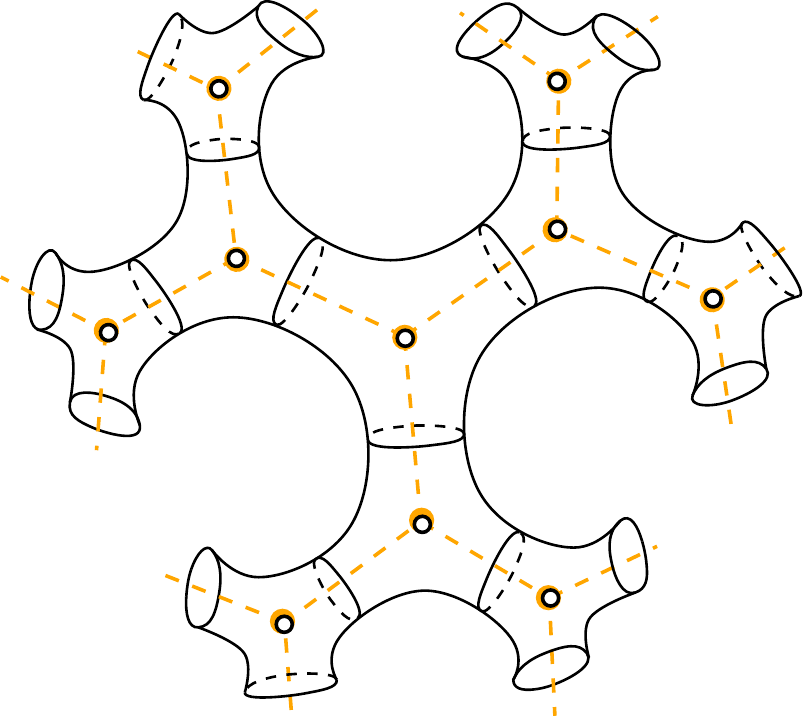}
\end{center}
\caption{Generations $0$, $1$ and $2$ of the infinite trivalent tree and the pants tree together with a base point on each pair of pants.}
\label{pic_pants_tree}
\end{figure}
 
For our application, we will be interested in the ``volume growth'' of these pants trees. To this end, we let $N_a(R)$ denote the number of midpoints at distance at most $R$ from some basepoint $O \in T_a$, that is also a midpoint that we fix once and for all. The growth of this quantity as a function of $R$ is a classical problem and fits in a vast body of literature on lattice point counting (see  \cite[\S 1.3]{GorodnikNevo} for an overview). The result we will use is a direct consequence of work by Patterson \cite{Patterson1, Patterson 2}, Lax-Phillips \cite{LaxPhillips}) and McMullen \cite[Theorem 3.5]{McMullen2}:

\begin{theorem}  \label{thm:counting} For any $a\in (0,\infty)$, there are constants $\mathrm{cst}_a$ and $\delta_a \in (0,1)$ such that 
$$ N_a(R) \sim \mathrm{cst}_a \cdot e^{\delta_a R} \quad \text{as } R\to \infty .$$
Furthermore, $\delta_a \to 1$ as $a\to \infty$.
\end{theorem}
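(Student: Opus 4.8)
The plan is to identify $N_a(R)$ with an orbit-counting function for a convex cocompact group of isometries of $\mathbb{H}^2$ and then invoke the cited results.

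\emph{Step 1: a free product of three reflections acting on $T_a$.} By the homogeneity of the construction of $T_a$, for each of the three edges $e_1,e_2,e_3$ of $\mathcal{T}_3$ incident to the vertex carrying the basepoint $O$, the combinatorial reflection exchanging the two halves of $\mathcal{T}_3$ separated by $e_i$ is realised by an isometric involution $\rho_i$ of $T_a$: this uses that the gluings are done without twist and with the midpoints on the same side, so that the two half-pants-trees on either side of $e_i$ are isometric compatibly with the gluing along $e_i$. Each $\rho_i$ is orientation-reversing, hence a reflection of $T_a$ in a complete geodesic $\beta_i$. The induced action of $\langle\rho_1,\rho_2,\rho_3\rangle$ on the set of copies of $P_a$ coincides with the left-multiplication action of $\Z_2*\Z_2*\Z_2=\langle a,b,c\mid a^2,b^2,c^2\rangle$ on the vertex set of its Cayley graph $\mathcal{T}_3$; hence $\langle\rho_1,\rho_2,\rho_3\rangle\cong\Z_2*\Z_2*\Z_2$ and it acts simply transitively on the copies of $P_a$, equivalently on the set of midpoints via $\gamma\mapsto\gamma O$. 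Therefore
\[
N_a(R)=\#\bigl\{\gamma\in\langle\rho_1,\rho_2,\rho_3\rangle:\ d_{T_a}(O,\gamma O)\le R\bigr\}.
\]

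\emph{Step 2: passing to $\mathbb{H}^2$.} Since each $\rho_i$ fixes a complete geodesic of $T_a$ and the products $\rho_i\rho_j$ act as hyperbolic translations along $\mathcal{T}_3$ (in particular are never parabolic), lifting $\beta_1,\beta_2,\beta_3$ compatibly to $\mathbb{H}^2$ yields three complete geodesics $\ell_1,\ell_2,\ell_3$ that are pairwise disjoint with pairwise distinct endpoints, and the reflections $\tilde\rho_1,\tilde\rho_2,\tilde\rho_3$ in them generate a discrete group $\Gamma_a\cong\Z_2*\Z_2*\Z_2$ which is convex cocompact and has infinite covolume (a fundamental domain is the region bounded by the three geodesics, which has infinite area). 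The lifts can be arranged so that, for a suitable lift $\hat O$ of $O$ and the tautological isomorphism $\gamma\leftrightarrow\tilde\gamma$ sending $\rho_i$ to $\tilde\rho_i$, one has $d_{T_a}(O,\gamma O)=d_{\mathbb{H}^2}(\hat O,\tilde\gamma\hat O)$: the $T_a$-geodesic realising $d_{T_a}(O,\gamma O)$ only crosses the separating curves between consecutive pairs of pants along the tree geodesic from $O$ to $\gamma O$ and never wraps around a pants, so it develops isometrically into $\mathbb{H}^2$. Consequently
\[
N_a(R)=\#\bigl\{\gamma\in\Gamma_a:\ d_{\mathbb{H}^2}(\hat O,\gamma\hat O)\le R\bigr\},
\]
and the counting theorem of Patterson, Lax--Phillips and McMullen \cite{Patterson1, LaxPhillips, McMullen2} gives $\mathrm{cst}_a>0$ such that $N_a(R)\sim\mathrm{cst}_a\,e^{\delta_a R}$, where $\delta_a=\delta(\Gamma_a)$ is the critical exponent of $\Gamma_a$. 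Here $\delta_a>0$ because $\Gamma_a$ is non-elementary, and $\delta_a<1$ because a geometrically finite group of isometries of $\mathbb{H}^2$ with critical exponent $1$ is a lattice, whereas $\Gamma_a$ has infinite covolume.

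\emph{Step 3: the limit $a\to\infty$, and the main obstacle.} As $a\to\infty$ the pair of pants $P_a$ degenerates: its three seams (the common perpendiculars between pairs of boundary components) have lengths tending to $0$, forcing $d_{\mathbb{H}^2}(\ell_i,\ell_j)\to0$, so that $\Gamma_a$ converges geometrically to the group generated by reflections in the three sides of an ideal triangle, a lattice, whose critical exponent equals $1$; by continuity of the critical exponent under this degeneration, $\delta_a\to1$. The delicate point of the whole argument is the isometric identification in Step 2, namely constructing the three reflection geodesics in exactly the right relative position and checking that no $T_a$-geodesic segment between two midpoints is shortened upon developing into $\mathbb{H}^2$ (so that the midpoint metric of $T_a$ is reproduced \emph{exactly}, not merely up to bounded error, by the orbit $\Gamma_a\cdot\hat O$); a secondary technical point is making the degeneration in Step 3 quantitative, by estimating the seam lengths of $P_a$ and the resulting distances $d_{\mathbb{H}^2}(\ell_i,\ell_j)$ as $a\to\infty$.
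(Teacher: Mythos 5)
Your proposal is in the same spirit as the paper's proof (reduce $N_a(R)$ to an orbit-counting problem for a convex cocompact reflection group, then invoke Patterson/Lax--Phillips/McMullen), but it routes through $T_a$ itself rather than through the simply connected hextree, and this is exactly where a genuine gap appears. The paper observes that $T_a$ is two copies of the hextree $C_a = \Gamma^{\mathrm{hex}}_a \cdot H_a$ glued along their boundaries, where $C_a$ is a \emph{convex, simply connected} subset of $\mathbb{H}^2$ containing all the midpoints. The whole content of the paper's proof is then the lemma that a distance-realizing geodesic between two midpoints of $T_a$ stays inside one copy $C^1$ of the hextree: any excursion into $C^2$ is reflected across the boundary back into $C^1$ (same length), and the resulting corners are then smoothed (strictly shorter), a contradiction. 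Once the geodesic is known to live in the convex set $C_a \subset \mathbb{H}^2$, the identity $N_a(R) = \#(\Gamma^{\mathrm{hex}}_a \cdot 0 \cap B_R(0))$ is immediate.

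You instead lift $T_a$ (which, unlike $C_a$, is \emph{not} simply connected: each pair of pants already carries topology) to its universal cover $\mathbb{H}^2$, lift the three reflections in the boundary geodesics of the base pants to reflections $\tilde\rho_i$, and assert that $d_{T_a}(O,\gamma O) = d_{\mathbb{H}^2}(\hat O, \tilde\gamma \hat O)$ for the tautological $\gamma \mapsto \tilde\gamma$. You correctly flag this as ``the delicate point of the whole argument,'' but the one-line justification offered (``the geodesic only crosses the separating curves along the tree geodesic and never wraps around a pants, so it develops isometrically'') is an assertion, not a proof. Two things are being skipped: (i) showing that the $T_a$-geodesic between midpoints does not make an excursion that, when developed, ends at a \emph{different} lift of $\gamma O$ than $\tilde\gamma\hat O$ --- equivalently, that in each pants it crosses, the geodesic chooses the ``correct'' homotopy class of arc between two boundary components; and (ii) checking that the three lifts $\tilde\rho_i$ can be chosen so that this compatibility persists for all reduced words $\gamma$, not just for single generators. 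This is precisely what the paper's reflection-and-smoothing lemma (together with the convexity of $C_a$) establishes, and without it or an equivalent the equality $N_a(R)=\#\{\gamma : d_{\mathbb{H}^2}(\hat O, \tilde\gamma\hat O) \le R\}$ is unproven. Your Step~1 (the reflection group $\Z_2 * \Z_2 * \Z_2$ acting simply transitively on midpoints) and Step~3 (the degeneration of $P_a$ to an ideal triangle forcing $\delta_a \to 1$; the paper just cites McMullen for a quantitative version) are sound in outline, but Step~2 needs the missing geometric argument.
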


\begin{proof} Let $\Gamma^{\mathrm{hex}}_a$ denote the Fuchsian group generated by the reflections in three non-consecutive sides of a right angled hyperbolic hexagon $H_a$ such that the three non-consecutive sides all have length $a$. We will uniformize so that $0$ is a lift of the point at equal distance from all sides of length $a$ of $H_a$.

Given $R>0$, we set
\[N^{\mathrm{hex}}_a(R) = \#\{ \Gamma_a^{\mathrm{hex}} \cdot 0 \cap B_R(0) \},\]
where $B_R(0)$ denotes the ball of radius $R$ around $0\in \mathbb{H}^2$. Since $\Gamma^{\mathrm{hex}}_a$ is a convex cocompact group, the latter quantity is asymptotic to $\mathrm{cst}_a \cdot e^{\delta_a R}$ as $R\to \infty$, where $\delta_a$ is the critical exponent of $\Gamma_a$ (see \cite{Patterson1, Patterson 2, LaxPhillips} for details).

We claim that
\[N_a(R) = N^{\mathrm{hex}}_a(R) \quad \text{for all } R>0,\]
which would imply the first claim. Indeed, $T_a$ consists of two copies of the trivalent tree of hexagons (or hextree \cite{Ken15}) $C_a = \Gamma^{\mathrm{hex}}_a \cdot H_a$ (see Figure \ref{pic_hextrees}). Let us denote by $C^1$ the copy of $C_a$ that (by construction) contains all the midpoints and by $C^2$ the other copy. 
\begin{figure}[!h]
\begin{center}
\includegraphics[scale=.5]{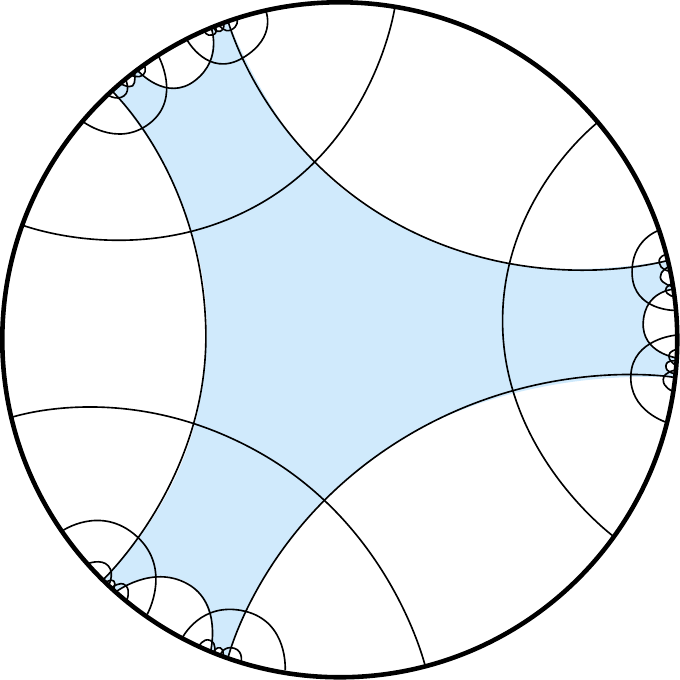}
\includegraphics[scale=.5]{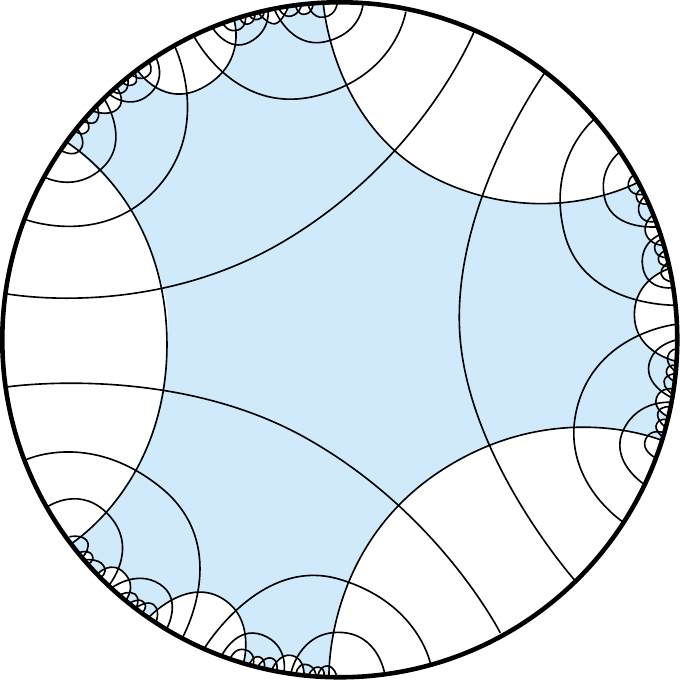}
\includegraphics[scale=.5]{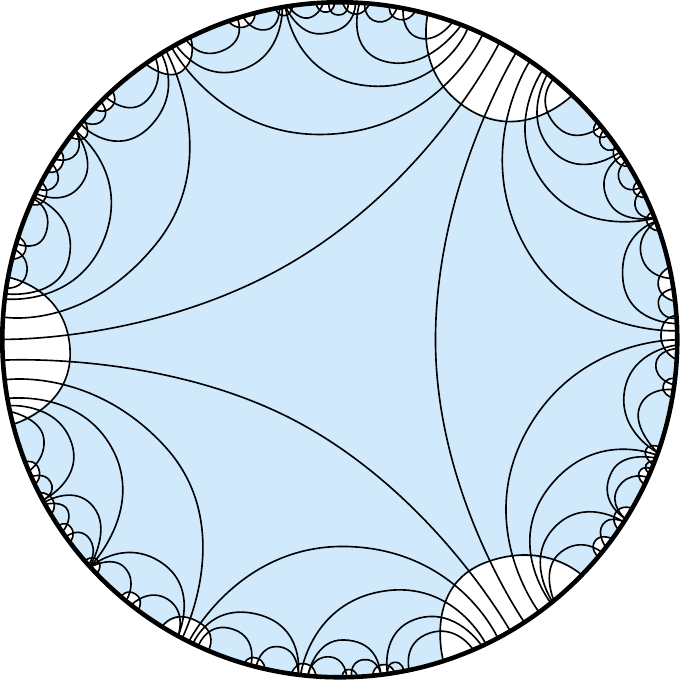}
\end{center}
\caption{$C_a$ for increasing values of $a$.}
\label{pic_hextrees}
\end{figure}

We now claim that all the distance-realizing geodesics between midpoints lie entirely in $C^1$. Suppose there are midpoints $m_1, m_2 \in T_ a$ for which this is not the case. Since $m_1, m_2 \in C^1$, the distance realizing geodesic between them starts and ends in $C^1$ and contains (potentially several) segments that run between two points on the boundary of $C^1$ and $C^2$ and are otherwise contained in $C^2$. By symmetry, we may reflect all these segments into $C^1$ without changing the length of the path. This already means that the distance can be realized by a path in $C^1$. Moreover, since the reflection creates singularities at the boundary between $C^1$ and $C^2$, the resulting path can be made shorter by smoothing the singularities inside $C^1$, making the path lie entirely in $C^1$. In conclusion, distances between midpoints are the same in $T_a$ and $C^1$ and as such, $N_a(R) = N^{\mathrm{hex}}_a(R)$.

The fact that $\delta_a \to 1$ as $a\to\infty$ follows from \cite[Theorem 3.5]{McMullen2}.
\end{proof}

\subsection{The model of random surface and its properties}
\label{sec:construct}

Given an random uniform trivalent graph $\mathcal{G}_n$ on {$2n$} vertices we consider the random surface 
$$S_{a,n}$$ 
obtained as follows. We associate to each vertex of $\mathcal{G}_{n}$ a copy of $P_a$. We then glue these copies along their boundary components as dictated by the edges of $\mathcal{G}_{n}$. The gluing we use is the same as in the construction of $T_a$, i.e. no twist and the distance between midpoints is minimal. As soon as $ \mathcal{G}_n$ is connected (which happens with high probability),  $S_{a,n}$ is an oriented hyperbolic surface of genus $n+1$.

The copies of $P_a$ that $S_{a,n}$ is constructed out of will be called the \emph{vertices} of  $S_{a,n}$ and are also in correspondence with the \emph{midpoints} $m_f$ of the pairs of pants $f$ which are (the images of) the point corresponding to $p_0\in P_a$. Notice for future reference that for fixed $a$, the diameter of the pair of pants $P_a$ is bounded and so any point in $S_{a,n}$ is within bounded distance of the boundary and within bounded distance of a midpoint of a pair of pants.

\section{Bounding the diameter of \texorpdfstring{$S_{a,n}$}{TEXT}}
The main technical result of this paper consists in bounding the diameter of $S_{a,n}$. 
\begin{proposition} \label{prop:diameter}For any $ \varepsilon>0$, with high probability as $n \to \infty$ we have 
 \begin{eqnarray} \mathrm{Diam}( S_{a,n}) \leq \left(\frac{1}{ \delta_a} + \varepsilon\right) \log n.  \label{eq:diameter} \end{eqnarray}
\end{proposition}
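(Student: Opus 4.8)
The plan is to transfer the diameter bound from the underlying random trivalent graph $\mathcal{G}_n$ to the hyperbolic surface $S_{a,n}$, using Theorem~\ref{thm:counting} to convert the combinatorial radius of a ball into a hyperbolic radius. Since every point of $S_{a,n}$ is within a bounded distance (depending only on $a$) of a midpoint, it suffices to bound the pairwise hyperbolic distance between midpoints $m_f, m_{f'}$, and the diameter bound will follow up to an additive $O(1)$ that is absorbed into the $\varepsilon \log n$ slack. Fix two vertices $f, f'$ of $\mathcal{G}_n$. The strategy is to grow a hyperbolic ball of radius $R$ around $m_f$ inside $S_{a,n}$ and, separately, around $m_{f'}$, and to show that for $R$ slightly larger than $\tfrac12 \cdot \tfrac{1}{\delta_a}\log n$ these two balls must intersect with high probability; a union bound over all $O(n^2)$ pairs then gives \eqref{eq:diameter}.

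The key step is controlling how many midpoints lie in a hyperbolic ball of radius $R$ around $m_f$ in $S_{a,n}$, and more importantly, how many distinct vertices of $\mathcal{G}_n$ this corresponds to. I would explore the neighborhood of $f$ in $\mathcal{G}_n$ via a breadth-first exploration, revealing pairings one at a time; as long as no ``collision'' has occurred (i.e.\ the explored part of $\mathcal{G}_n$ is a tree), the corresponding piece of $S_{a,n}$ is isometric to a piece of the pants tree $T_a$, and Theorem~\ref{thm:counting} tells us precisely that the number of midpoints within hyperbolic distance $R$ in $T_a$ is $\asymp e^{\delta_a R}$. Thus, running the exploration until we have discovered roughly $n^{1/2+\varepsilon'}$ vertices, the hyperbolic radius needed is at most $\big(\tfrac12 + \varepsilon''\big)\tfrac{1}{\delta_a}\log n$, provided the tree-like structure persists that long. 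A standard ``birthday'' / first-moment argument shows that the number of collisions encountered while revealing the first $o(\sqrt n)$ half-edges is $O(1)$ in expectation and each collision only perturbs the geometry by a bounded amount (it identifies two boundary components, which can only decrease distances, and affects the count of reachable vertices by a bounded factor), so the volume growth estimate survives. Then, since we have two sets of $\approx n^{1/2+\varepsilon'}$ vertices each reached within this radius from $m_f$ and from $m_{f'}$ respectively, another birthday-type computation — using that the random pairing is exchangeable — shows these two vertex sets intersect with probability $1 - o(n^{-2})$, and at an intersection vertex the two explored hyperbolic balls meet, giving $d(m_f, m_{f'}) \leq 2R + O(1) \leq \big(\tfrac{1}{\delta_a}+\varepsilon\big)\log n$.

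The main obstacle I expect is making rigorous the claim that the hyperbolic volume growth inside $S_{a,n}$ is genuinely governed by the $T_a$ estimate even in the presence of the (few) collisions: when a boundary component of an already-explored pants is glued back to another already-explored pants, the explored region is no longer a subset of $T_a$, and one must argue that (a) this happens only $O(1)$ times before $\sqrt n$ vertices are seen, and (b) each such event changes the reachable-midpoint count by at most a multiplicative constant, so that after $O(1)$ such events the count is still $e^{\delta_a R}$ up to a constant — which is harmless since we only need the exponential rate. A secondary technical point is that Theorem~\ref{thm:counting} is an asymptotic statement about $T_a$ with an unspecified constant $\mathrm{cst}_a$, so one has to be slightly careful: the bound ``ball of radius $R$ contains $\geq c_a e^{\delta_a R}$ midpoints'' only holds for $R \geq R_0(a)$, but since $R \to \infty$ with $n$ this causes no trouble. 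Finally, one must ensure the union bound over pairs is affordable: the failure probability of the two-ball intersection must be $o(n^{-2})$, which forces the exploration to reach a number of vertices that is $n^{1/2+\varepsilon'}$ with $\varepsilon'$ chosen so that the resulting radius is still below $\big(\tfrac1{2\delta_a}+\tfrac\varepsilon2\big)\log n$; since $\varepsilon$ is arbitrary this is always possible, and the connectedness of $\mathcal{G}_n$ with high probability handles the remaining global issue.
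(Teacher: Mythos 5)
The overall architecture of your argument matches the paper's: explore roughly $\sqrt n$ vertices from a midpoint, use Theorem~\ref{thm:counting} to convert that count into a hyperbolic radius $\approx \tfrac{1}{2\delta_a}\log n$ on the tree-like part, then show two such explorations must collide and union bound over pairs. However, there is a genuine gap in the collision analysis, and it sits exactly where the paper does its most careful work.

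You want to explore until you have found $n^{1/2+\varepsilon'}$ vertices (you need to overshoot $\sqrt n$ by a diverging factor so that the two explorations collide with probability $1 - o(n^{-2})$), yet you control the collisions by a birthday/first-moment bound that only gives $O(1)$ expected collisions over the first $o(\sqrt n)$ reveals. These two regimes are incompatible: revealing $n^{1/2+\varepsilon'}$ half-edges produces on the order of $n^{2\varepsilon'} \to \infty$ collisions in expectation, so the premise ``after $O(1)$ such events the count is still $e^{\delta_a R}$'' never applies at the scale you are forced to work at. Moreover, even a single collision is not a ``bounded multiplicative factor'' on the reachable-midpoint count: a collision identifying a boundary leg at graph distance $1$ from the root prunes a constant fraction of the would-be explored tree, and nothing in the first-moment argument forbids collisions near the root. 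Finally, the union bound demands the collision estimate itself hold with probability $1 - o(n^{-2})$; a bound on the expectation is not enough, and you need a concentration/Poisson-type tail estimate.

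The paper closes this gap with a two-phase exploration and a deterministic lemma. In the first $n^{1/2-\varepsilon}$ steps, the Poisson-like tail bound $\tfrac{1}{k!}(n^{1/2-\varepsilon})^k(\mathrm{cst}\,n^{-1/2-\varepsilon})^k = o(n^{-3})$ shows there are fewer than $k = k(\varepsilon)$ bad steps; then, rather than arguing ``bounded factor per collision,'' one observes that with $\leq k$ defects there must exist a vertex $\eta$ at graph distance $\leq k$ from the root beyond which there are no defects, so the exploration beyond $\eta$ is literally a shifted copy of $T_a$ and inherits the full exponential volume growth with only a bounded (not per-collision, but total) loss. In the second phase up to time $\tau = $ discovery of $n^{1/2}\log n$ vertices, a similar computation shows there are $\leq \log^3 n$ bad steps with probability $1-o(n^{-3})$, and again one finds clean subtrees beyond all but a vanishing fraction of the already-discovered boundary midpoints. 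This is the step your proposal is missing: a way to absorb a diverging number of collisions without losing the exponential growth rate, and to do so with a tail bound strong enough to survive the union over $n^2$ pairs.
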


Let us first deduce our main result from it:
\begin{proof}[Proof of Theorem \ref{thm_main}] Because of the area argument, all we need to prove is that $\limsup_{g\to\infty} D_g/\log(g) \leq 1 + \varepsilon$ for all $\varepsilon>0$. So, using Theorem \ref{thm:counting} we choose $a$ large enough so that $ \frac{1}{\delta_a} \leq 1 + \varepsilon$. Proposition \ref{prop:diameter} provides us with a sequence of random surfaces $S_{a,n}$ of genus $n+1$ whose diameter satisfies \eqref{eq:diameter} with high probability. This in particular implies the existence of such surfaces when $n \to \infty$ and hence the claim.\end{proof}

Before we get to the proof of this proposition, we describe the basic structure of it. The idea is to explore the neighborhood of a random vertex for \emph{the hyperbolic metric on $S_{a,n}$} until we find $\approx \sqrt{n}$ vertices. We then show as in \cite{BFdlV} that the neighborhood explored is almost tree-like with only a few defects. In turns, the volume growth (in the hyperbolic metric) around such a point is the same as in the pants tree $T_a$, i.e. the radius reached is of order $ \frac{1}{\delta_a} \log  \sqrt{n}$. If we perform two such explorations from typical vertices then those explorations will have merged  and thus the distance between those points is less than $2 \times  \frac{1}{\delta_a} \log  \sqrt{n} = \frac{1}{\delta_a} \log n$. Since those bounds holds with very high probability, they hold for any pair of points instead of just typical points.

\begin{proof}[Proof of the Proposition.] Start from a given vertex of the random trivalent graph $ \mathcal{G}_{n}$ from which our surface is built, and let us explore iteratively its neighboring vertices (they correspond to the midpoints of the associated pairs of pants) using the hyperbolic distance inside $S_{a,n}$. More precisely, remember that $ \mathcal{G}_{n}$ can be built by paring in a uniform fashion the legs of $2n$ vertices, each of them having $3$ legs. We shall thus start from a given vertex $\rho$ and pair step by step the legs to grow the neighborhood of the vertex $\rho$. Iteratively at step $i\geq0$, if $\partial \mathcal{E}_i$ denotes the set of legs in the component of the origin which have not been paired yet, then we decide to pair a leg of $ \partial \mathcal{E}_{i}$ of $\rho$ whose corresponding segment in $T_a$ \emph{minimizes the hyperbolic distance in $T_a$ to the midpoint of the face where we started}.  Two events may occur at step $i$: 
\begin{itemize}
\item either we discover a new vertex of $ \mathcal{G}_{n}$ (i.e.~a new pair of pants of $S_{a,n}$) in which case the explored component $ \mathcal{E}_{i+1}$ gains a pair of pants and one leg on its boundary (it gains $2$ and loses $1$),
\item  or the edge is paired with another leg on the boundary of the explored part $\partial \mathcal{E}_{i}$. In this case, $ \partial \mathcal{E}_{i+1}$ has lost two legs compared to $ \partial \mathcal{E}_{i}$ and we put a cross (a defect) on the two corresponding segments of $T_a$, see Figure \ref{fig:explo} for an illustration. These steps are called \emph{bad steps}.
\end{itemize}

\begin{figure}[!h]
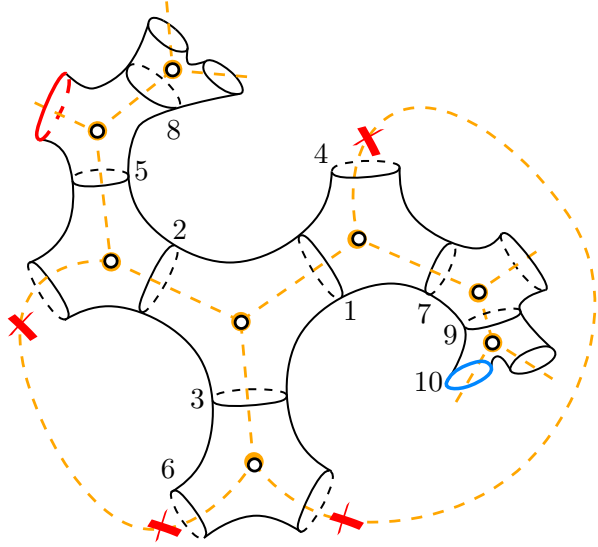

 \begin{center}
 \begin{overpic}{pic_explo2}
  \put(57,38) {$1$}
  \put(28,52) {$2$} 
  \put(31,23) {$3$} 
  \put(52,65) {$4$} 
  \put(21.5,62) {$5$}    
  \put(26,11) {$6$}   
  \put(69.5,37.5) {$7$}    
  \put(27,69) {$8$}
  \put(74,34) {$9$}      
  \put(69,26) {$10$}        
\end{overpic}
 \caption{ \label{fig:explo} Illustration of the exploration of the neighborhood of a vertex in $ \mathcal{G}_{n}$ using the hyperbolic metric on $T_a$. The numbers display the order in which the half-edges are explored. In particular notice that at step $10$, we try to explore an edge which is farther from the origin in terms of the graph distance on the tree compared to the red edge (but which must be closer in terms of the hyperbolic distance). The crossed edges are half-edges which are paired during the exploration with already explored pants.}
 \end{center}
 \end{figure}
 
 We shall explore the neighborhood of $\rho$ in this fashion until the time $\tau$ where we have found $n^{1/2} \log n$ vertices (equivalently pairs of pants) in $ \mathcal{E}_{i}$. It could be that the exploration finishes before time $\tau$ if we pair all the legs i.e. if $\partial \mathcal{E}_{i} = \varnothing$. We shall call this event $ \mathcal{D}$, for ``disconnection''. This happens with a probability of order $1/n^{2}$ e.g.~by discovering a new hexagon at step $1$ and then pairing up the $4$ boundary legs of $ \partial \mathcal{E}_{1}$ in steps $2$ and $3$.

 Since $ \#\partial \mathcal{E}_{i} \leq 2+i$ and $\tau \leq 3n^{1/2}\log n$, and since the legs of $ \mathcal{G}_{n}$ are paired uniformly at random, the probability of getting a bad step $i \leq \tau$ satisfies  $$  \mathbb{P}( \mbox{step }i \mbox{ is bad}) = \frac{   \# \partial \mathcal{E}_{i}-1}{6n-2i-1} \leq  \mathrm{cst} \frac{i}{n}.$$  See Proposition 11 in \cite{BCP19} for a proof of this, since our exploration is ``Markovian''. In particular, the probability to make $k$ bad steps during the first $n^{1/2- \varepsilon}$ steps is bounded above by
\[    \frac{1}{k!}\left( n^{1/2-\eps} \right)^k \times \left( \mathrm{cst} n^{-1/2-\eps} \right)^k =  \mathrm{cst'}_{k} \cdot n^{-2 \eps k} = o(n^{-3})\]
if we choose $k>3/\eps$ (note that $k$ is a constant depending only on $\eps$). By a similar calculation, the probability to perform $\log^3 n$ bad steps inbetween time $n^{{1/2- \eps}}$ and $\tau$ is bounded above (for large $n$'s) by 
\[\frac{1}{(\log^3(n))!} \left(3n^{1/2} \log(n)\right)^{\log^3(n)} \times \left(\mathrm{cst} \log(n) n^{-1/2} \right)^{\log^3(n)} \underset{ \mathrm{Stirling}}{\leq} \left(\frac{\mathrm{cst}'}{\log(n)}\right)^{\log^3(n)} = o(n^{-3}). \]

Let us now present a deterministic lemma:
\begin{lemma} For any $ \varepsilon>0$ and any $a \in (0, \infty)$, suppose that during an exploration as above, there are fewer than $k$ bad steps until time $n^{{1/2 - \varepsilon}}$ and less than $ \log^{3}n$ bad steps until time $\tau$. Then the maximal distance $R$ reached in $T_a$ (hence in $S_{a,n}$) from the midpoint of the face where the exploration started satisfies asymptotically
$$ R \leq  \frac{1}{2}\left( \frac{1}{\delta_a}+ \varepsilon \right) \log n.$$
\end{lemma}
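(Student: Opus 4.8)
The plan is to upgrade the statement to the quantitative bound $R \le \tfrac{1}{2\delta_a}\log n + o(\log n)$, which already gives the lemma since $o(\log n)\le \tfrac{\varepsilon}{2}\log n$ for $n$ large. Write $C_a:=\mathrm{diam}(P_a)<\infty$, and for a pant $f$ of $T_a$ let $d(O,m_f)$ be the distance of its midpoint to the starting midpoint $O$. At step $i$ the exploration processes the free segment of $\partial\mathcal{E}_i$ closest to $O$; call that distance $r_i$. Each segment of $\partial\mathcal{E}_i$ is a boundary component of a pant discovered through a segment at distance $\le r_i$ (or of $\rho$'s pant), hence lies within $r_i+C_a$ of $O$; together with the fact that any segment born at a good step lies beyond the segment just processed, this shows $(r_i)$ is nondecreasing with $r_{i+1}\le r_i+C_a$. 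Let $i_D$ be the first step with $r_{i_D}\ge D$ (if there is none, then already $R<D+C_a$). By the greedy rule, at step $i_D$ every free segment that ever became available and has distance $<D$ to $O$ has been processed; chasing this along pants-tree geodesics from $\rho$ shows that $\mathcal{E}_{i_D}$ contains every pant $f$ with $d(O,m_f)<D$ all of whose \emph{ancestral segments} — those on the pants-tree geodesic from $\rho$ to $f$ — are uncrossed. Thus, if $V(D)$ denotes the number of such pants and if we can pick $D=\tfrac{1}{2\delta_a}\log n+o(\log n)$ with $V(D)\ge n^{1/2}\log n$, then $|\mathcal{E}_{i_D}|\ge n^{1/2}\log n$, so the stopping time is $\tau\le i_D$, giving $r_\tau\le r_{i_D}\le D+C_a$ and finally $R\le r_\tau+C_a\le D+2C_a=\tfrac{1}{2\delta_a}\log n+o(\log n)$.

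It remains to bound the number of pants in $B_D(O)$ that are \emph{blocked}, i.e. have a crossed segment on their path to $\rho$. Split the crossed segments into \emph{early} ones — created during a bad step before time $n^{1/2-\varepsilon}$, of which there are at most $2(k-1)$ by hypothesis — and \emph{late} ones, of which there are fewer than $2\log^3 n$; since distinct bad steps pair distinct legs, all crossed segments are distinct frontier edges and the pants-subtrees they cut off are pairwise disjoint. A volume count (using Theorem~\ref{thm:counting}, since a segment within distance $r$ of $O$ is a boundary component of a pant within distance $r+C_a$) gives $\#\{\text{segments within distance }r\text{ of }O\}\le c_a e^{\delta_a r}$, hence $r_i\ge \tfrac{1}{\delta_a}\log(i/c_a)$; in particular every late crossed segment lies at distance $\ge\rho_{\min}:=\tfrac{1/2-\varepsilon}{\delta_a}\log n-O_a(1)$ from $O$. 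Moreover a segment at distance $\rho$ from $O$ cuts off at most $c_a'e^{\delta_a(D-\rho)}$ pants within distance $D$ of $O$ (translate the count of Theorem~\ref{thm:counting} to the root of that subtree). Summing over the late crossings, they block at most $2\log^3 n\cdot c_a'e^{\delta_a(D-\rho_{\min})}=n^{-(1/2-\varepsilon)+o(1)}e^{\delta_a D}=o(e^{\delta_a D})$ pants of $B_D(O)$.

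For the early crossings, let $\widehat{\mathcal{E}}$ be the connected component of $\rho$ in $\mathcal{T}_3$ after deleting the $\le 2(k-1)$ early crossed edges; it contains every pant discovered by time $\tau$, so $\#\widehat{\mathcal{E}}\ge n^{1/2}\log n$. A finite subtree of $\mathcal{T}_3$ on $N$ vertices has exactly $N+2$ boundary edges, so a finite $\widehat{\mathcal{E}}$ would force $2(k-1)\ge N+2\ge n^{1/2}\log n$, impossible for $n$ large; hence $\widehat{\mathcal{E}}$ is infinite. I claim that an infinite subtree of $\mathcal{T}_3$ obtained by deleting $m$ edges contains a \emph{full branch} — one of the two halves into which a segment cuts $T_a$, carrying none of the deleted edges — rooted at pants-tree depth $\le m+1$: descending from $\rho$, at each vertex either some forward sub-branch contains no deleted edge (done), or, using that the $\rho$-component stays infinite, we may pass into a forward sub-branch whose connecting edge is intact and which carries strictly fewer deleted edges; after at most $m$ such steps the budget is exhausted. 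By Theorem~\ref{thm:counting} (applied to the appropriate conjugate group) the root of a full branch sees $\ge c_a''e^{\delta_a R}$ pants within distance $R$ for $R$ large; since that root lies within distance $O_a(m)=O_a(k)$ of $O$, we obtain $\#(\widehat{\mathcal{E}}\cap B_D(O))\ge c(a,k)\,e^{\delta_a D}$ for $D$ large, where $c(a,k)=c_a''e^{-\delta_a O_a(k)}>0$ depends on $a$ and $k$ but not on $n$.

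Combining the last two steps, the number of pants of $B_D(O)$ with uncrossed ancestral path is at least $c(a,k)e^{\delta_a D}-o(e^{\delta_a D})\ge\tfrac12 c(a,k)e^{\delta_a D}$ for $D$ large, and this exceeds $n^{1/2}\log n$ as soon as $D\ge\tfrac{1}{2\delta_a}\log n+\tfrac{1}{\delta_a}\log\log n+O_{a,\varepsilon}(1)$; taking $D$ equal to this value yields $V(D)\ge n^{1/2}\log n$, completing the argument. The one delicate point is controlling the early crossings: a single bad step near $O$ severs a fixed positive proportion of $T_a$, so a plain union bound over crossed segments is hopeless — it would lose a $\log\log n$ factor (or, using only the $\log^3 n$ bound, a $\log^3 n$ factor) inside the exponential and destroy the $\tfrac{\varepsilon}{2}\log n$ margin. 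This is exactly what the two-scale hypothesis is for: only $O_\varepsilon(1)$ segments may be crossed "near" $O$, so the cut-off lemma applies with a fixed constant $c(a,k)$, while the remaining $\le\log^3 n$ crossings are forced to sit at depth $\asymp\tfrac{1/2-\varepsilon}{\delta_a}\log n$ and are therefore harmless.
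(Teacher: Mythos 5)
Your proof is correct and rests on the same two pillars as the paper's — the greedy nearest-first exploration in $T_a$ and the volume growth of Theorem~\ref{thm:counting}, combined with the two-scale bad-step hypothesis — but it organizes the argument rather differently and more explicitly. The paper works in two phases, bounding the radius $R_1$ reached after $n^{1/2-\varepsilon}$ steps by locating a single clean descendant $\eta$ near $\rho$ and counting midpoints beyond it, and then repeating the argument in phase two from the $\approx n^{1/2-2\varepsilon}$ boundary midpoints, at least $n^{1/2-3\varepsilon}$ of which stay clean. You instead run a single global $D$-ball count: you formalize the monotonicity $r_i \le r_{i+1}$ of the greedy exploration, observe that the exploration stops before the processed segment reaches distance $D$ as soon as the number $V(D)$ of pants in $B_D(O)$ with uncrossed ancestral path exceeds $n^{1/2}\log n$, and then lower-bound $V(D)$. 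Your handling of the early defects via the subtree boundary-edge identity ($N+2$ boundary edges for a subtree of $\mathcal{T}_3$ on $N$ vertices) and the resulting ``full branch'' at depth $O(k)$ makes rigorous and quantitative the paper's terse assertion that some $\eta$ at graph distance $\le k$ has no defect beyond it; your handling of the late defects (showing via $i\le c_a e^{\delta_a r_i}$ that they must sit at distance $\gtrsim\frac{(1/2-\varepsilon)}{\delta_a}\log n$, so each cuts off only $e^{\delta_a(D-\rho)}$ of the $D$-ball) replaces the paper's ``expand from $n^{1/2-3\varepsilon}$ clean boundary midpoints'' step. The two routes buy slightly different things: the paper's version is shorter and stays closer to the probabilistic setup, while yours is sharper — it gives $R\le\frac{1}{2\delta_a}\log n+O(\log\log n)$ outright, whereas the paper's phase-two step, read literally, loses an extra $O(\varepsilon/\delta_a)\log n$ that must be absorbed by a relabelling of $\varepsilon$ — and it isolates a clean deterministic ``volume-counting with a bounded number of cuts'' lemma that doesn't depend on the phase structure at all. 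One small remark on your write-up: the inequality $r_{i+1}\le r_i + C_a$ asserted in the first paragraph is not needed and is actually not obvious after a bad step (both nearby segments may be removed); what you actually use, correctly, is that $\tau\le i_D-1$ forces $r_\tau\le r_{i_D-1}<D$, so the claim can simply be dropped.
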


\begin{proof} If there were no bad steps during the exploration, we would have explored a disk inside $T_a$ until we have found $n^{1/2}\log n$ midpoints of faces. By Theorem \ref{thm:counting} and because the diameter of the pairs of pants is bounded by a constant depending on $a$ only we would have asymptotically 
$$ R \leq  \frac{1}{\delta_a}\log \left(  \mathrm{cst}_a \cdot n^{{1/2}} \log n\right).$$
So, we need to argue that in a situation where we allow the defects as above, there is still exponential volume growth at the same rate.

First consider the first $n^{1/2-\varepsilon}$ steps (the first phase) of the exploration. Assuming that no disconnection occurs before graph-distance $k$ is reached, we note that there must be at least one vertex $\eta$ at graph distance at most $k$ from $\rho$ none of whose descendents obtain a defect in the first phase of the exploration. Let us lift this situation to $T_a$, so that $0$ is a lift of $\rho$. Set $r = \mathrm{d}(0,\eta)$ and note that by symmetry\footnote{The $\frac{1}{3}$ comes from the fact that $\eta$ itself needs to be counted once and not for $\frac{2}{3}$.}
\begin{multline*}
\#\left\{ m \text{ a midpoint of a pair of pants in }T_a; \begin{array}{c}
d(m,\eta) \leq t-r \\
m \text{ lies beyond } \eta
\end{array} \right\}\\
 = \frac{1}{3}+ \frac{2}{3}\#\left\{ m \text{ a midpoint of a pair of pants in }T_a; \;
d(m,\eta) \leq t-r  \right\} =  \frac{1}{3}+ \frac{2}{3}N_a(t-r)
\end{multline*}
for all $t \geq 0$, where the phrase ``$m$ lies beyond $\eta$'' is shorthand for ``the geodesic between $O$ and $m$ intersects the pair of pants corresponding to $\eta$''. Since all these midpoints can be reached by the exploration and $r$ is uniformly bounded (in terms of $k$ and $a$), the argument above tells us that the distance $R_1$ reached after the first phase of the exploration satisfies
\[R_1 \leq \frac{1}{\delta_a} \log\left(n^{1/2-\varepsilon} \cdot \frac{3}{2 \mathrm{cst}_a}\right).\]

For the second phase of the exploration we note that at the end of the first phase, we have discovered at least $n^{1/2-2\varepsilon}$ midpoints at distance at least $R_1-\mathrm{cst}$, none of which lie beyond one another. Since there are fewer than $\log^3(n)$ defects in the second phase, beyond at least $n^{1/2-3\varepsilon}$ of these midpoints, there is no defect in the second phase either. So with the same argument as above, we obtain the lemma.
\end{proof}

 Let us now finish the proof of the proposition. Gathering-up our findings, we have seen that the exploration of the ``hyperbolic neighborhood'' of size $n^{{1/2}}\log n$ of a vertex $\rho$ in $S_{a,n}$ either disconnects the surface, or has radius bounded by $  \frac{1}{2}\left( \frac{1}{\delta_a}+ \varepsilon \right) \log n$ with a probability $1 - o(n^{{-3}})$. We claim that if we explore as above the neighborhood of another uniformly chosen vertex $\rho'$, then we either disconnect the surface or merge with the previous exploration with very high probability. Indeed, since after the first exploration we have roughly $ n^{1/2} \log n$ half-edges available, the probability that second exploration runs until time $\tau$ and avoids linking to them is bounded above by 
 $$ \left( 1-\frac{\log n}{ \sqrt{n}}\right)^{  \sqrt{n} \log n}  \leq \exp( - \log^{2} n) = o(n^{-3}).$$
 Performing a union bound over the $n^{2}$ pair of vertices of $ \mathcal{G}_{n}$ we indeed deduce that with probability at least $1-o(n^{-1})$ we have the dichotomy:
 $$ \mbox{either } S_{a, n} \mbox{ is disconnected } \quad \mbox{ or } \quad \sup_{m_{f},m_{f'}} d_{ \mathrm{hyp}}(m_{f},m_{f'}) \leq 2 \times \frac{1}{2}\left( \frac{1}{\delta_a}+ \varepsilon \right) \log n.$$ But since we know that $ \mathcal{G}_n$ or equivalently $S_{a,n}$ is connected with high probability (see e.g. \cite{BCP19})  our proposition follows. \end{proof}

\paragraph{Comments.} We end the paper with a few remarks and questions. First, another natural quantity to try to minimize as a function over moduli space is the ratio 
\[
 \mathrm{diam}(X) \big/ \mathrm{sys}(X),
\] 
where $\mathrm{sys}(X)$ denotes the systole\footnote{Recall that the systole of a closed hyperbolic surface is the length of the shortest closed geodesic on this surface.} of $X$. The analogous problem for regular graphs is well studied (see for instance \cite{ArzBis} and references therein). The open problem of finding the maximal systole of a closed hyperbolic surface of genus $g$ has also received a lot of attention \cite{BuserSarnak, Parlier, Schmutz, KatzSabourau, KSV, PW, Pet, Hamenstaedt, FortierBourqueRafi}.

One could also try to find the second order in our Theorem \ref{thm_main}. In the case of random graphs, this is a $\log \log$ \cite{BFdlV}. In our setting, this would require to let $a \to \infty$ at the right speed when $n \to \infty$. A heuristic argument, based in part on \cite[Theorem 3.5]{McMullen2}, makes us believe that an error term of the order $\log \log (\mathrm{genus})$ might be attainable.

Finally, it would also be interesting to have a more “explicit” construction of a sequence of closed hyperbolic surfaces whose diameters are asymptotic to $\log(g)$. For instance, can this be done with a sequence of
congruence covers of a closed arithmetic surface?

\bibliographystyle{alpha}
\bibliography{bib_onelogg.bib}

\end{document}